\documentclass[11pt]{amsart}

\usepackage[margin=1in]{geometry}

\usepackage{amsmath,amsthm,amssymb}
\usepackage[hidelinks]{hyperref}

\newtheorem{theorem}{Theorem}

\newtheorem{corollary}[theorem]{Corollary}
\theoremstyle{remark}
\newtheorem{remark}[theorem]{Remark}

\newcommand{\R}{\mathbb{R}}
\newcommand{\Z}{\mathbb{Z}}
\newcommand{\F}{\mathbb{F}}
\newcommand{\one}{\mathbf{1}}
\newcommand{\E}{\mathbb{E}}
\DeclareMathOperator{\wt}{wt}
\DeclareMathOperator{\Bin}{Bin}
\DeclareMathOperator{\covol}{covol}

\title[Gaussian-Mass Maximality for Construction-A Lattices from Binary Self-Dual Codes]{Majorization and Gaussian-Mass Maximality for Construction-A Lattices from Binary Self-Dual Codes}

\author[Scott Duke Kominers]{Scott Duke Kominers}

\address{Harvard Business School; Department of Economics and Center of
Mathematical Sciences and Applications, Harvard University; and a16z crypto}

\email{kominers@fas.harvard.edu}

\thanks{I used LLMs to assist with computations, analysis, and synthesis in
the preparation of this article, particularly GPT-5.5 Pro and Claude 4.8 Opus
(accessed in part via Poe with the support of Quora, where I am an advisor).
I particularly appreciate a number of helpful comments from Noam~D.\ Elkies
(see in particular Remark~\ref{rmk:other-fields}), as well as a thorough
review from Refine.ink. The problem, methods, and eventual written form are my
own; and of course any errors remain my responsibility. This work was conducted
while I was visiting the Technological Innovation, Entrepreneurship, and
Strategic Management (TIES) Group at the MIT Sloan School of Management; I
greatly appreciate their hospitality.}

\subjclass[2020]{Primary 11H06; Secondary 11H31, 94B05, 94B65}
\keywords{Construction A, self-dual codes, unimodular lattices, theta series, Gaussian mass}

\begin{document}
\maketitle

\begin{abstract}
Regev and Stephens-Davidowitz conjectured that the integer lattice maximizes
Gaussian mass among integral lattices of a given rank. We prove this,
including the equality case, for all unimodular Construction-A lattices arising
from binary self-dual codes. The proof reduces the theta-series inequality to
a sharp majorization statement for codes: if $C$ is a binary self-dual
$[2k,k]$ code, then the half-weight distribution of $C$ is dominated in convex
order by $\Bin(k,1/2)$, which is the corresponding distribution for the
repetition-code model of $\Z^{2k}$. Indeed, after putting $C$ in systematic
form $[I\mid A]$, self-duality gives $AA^T=I$ over $\F_2$, so for a uniformly
random message $a$ the two weights $\wt(a)$ and $\wt(aA)$ have the same
binomial law. The half-weight of the resulting codeword is their average, and
Jensen's inequality then gives convex-order domination. Applied to the convex
test functions that build the theta series, this yields a sum-of-squares
formula for the Gaussian-mass gap; applied to hinge functions, it gives
coefficientwise nonnegativity of the reduced gap polynomial.
\end{abstract}

\section{Introduction}

For a full-rank lattice $\Lambda\subset \R^n$, we write
\[
  \Theta_\Lambda(q)=\sum_{x\in\Lambda} q^{\|x\|^2}
\]
for the \emph{Gaussian mass}, with the real nome $q=e^{-t}\in(0,1)$.
A lattice is \emph{integral} if $\langle x,y\rangle\in\Z$ for all
$x,y\in\Lambda$. Regev and Stephens-Davidowitz conjectured that the integer
lattice maximizes the Gaussian mass among integral lattices of fixed rank,
i.e., that
\begin{equation}
  \Theta_\Lambda(q)\leq \Theta_{\Z^n}(q)\qquad (0<q<1)
\label{eq:conjecture}
\end{equation}
for every integral $\Lambda\subset\R^n$ of rank $n$ \cite{RegevSD_JNT}. This
is the smooth, Gaussian-mass counterpart of the discrete shell bounds of the
reverse-Minkowski program \cite{RegevSD_RevMink,RegevSD_JNT}; the analogous
statement for the Epstein zeta function is known in a range of parameters
\cite{EisenbergRegevSD}, but the theta inequality itself is open. In recent
work~\cite{Kominers_NoGo}, we showed that no Cohn--Elkies-type scalar
Poisson-summation certificate can attain the sharp $\Z^n$ bound in dimensions
$n\geq 8$; even so, in~\cite{Kominers_Rank32}, we were able to establish the
conjecture for every unimodular integral lattice through rank $32$ (and every
even unimodular lattice through rank $40$), using theta-function and
modular-form methods.

Here, we prove the conjectured inequality \eqref{eq:conjecture} for the
unimodular lattices obtained from binary self-dual codes by Construction~A
(Theorem~\ref{thm:square-certificate} and
Corollaries~\ref{cor:gaussian}--\ref{cor:equality}), uniformly across the
family. Moreover, we show that equality occurs only for the direct sum of
length-$2$ repetition codes---equivalently, only when \(L_C\cong\Z^{2k}\).

The result is, at its core, a majorization statement about codes. For a binary
self-dual $[2k,k]$ code~$C$ with associated Construction-A lattice $L_C$, the
inequality $\Theta_{L_C}\leq\Theta_{\Z^{2k}}$ is certified by the stronger
statement that the half-weight distribution of $C$ is dominated in convex
order by $\Bin(k,1/2)$, the corresponding half-weight distribution for the
repetition-code model of $\Z^{2k}$. So $\Z^{2k}$ is the
largest-Gaussian-mass lattice in the family because its associated code---a
direct sum of repetition codes---has the most spread-out half-weight
distribution of any binary self-dual code.

The reason for the domination relationship is a one-line probabilistic
coupling. We write $C$ in systematic form $[I\mid A]$; self-duality forces
$AA^T=I$ over $\F_2$, so $A$ is invertible and the two halves of a uniformly
random codeword, $X=\wt(a)$ and $Y=\wt(aA)$, are each $\Bin(k,1/2)$. The
codeword's half-weight $K=(X+Y)/2$ is therefore an average of two identically
distributed variables, and---by Jensen's inequality---an average is always
dominated in convex order by its marginals. Testing the domination against the
convex functions $t\mapsto z^{2t}$ that assemble the theta series turns it
into a sum of squares:
\[
  \Theta_{\Z^{2k}}(q)-\Theta_{L_C}(q)
  =\theta_3(q^2)^{2k}\cdot\frac12\sum_{a\in\F_2^k}
  \bigl(z^{\wt(a)}-z^{\wt(aA)}\bigr)^2\ \geq 0,
  \qquad z=\frac{\theta_2(q^2)}{\theta_3(q^2)}\in(0,1).
\]

Our sum-of-squares and convex-order argument is not special to \(\F_2\): it
uses only that \(A\) is invertible, so the same certificate covers self-dual
codes over any finite field and Hermitian self-dual codes, with equality
governed by whether $-1$ is a square (respectively, a norm); see
Remark~\ref{rmk:other-fields}.

\subsection*{Relation to prior work.}
That $\Z^n$ should maximize the theta series among unimodular lattices is a
known conjecture, studied through the flatness factor and the smoothing
parameter \cite{BollaufLin}. For the binary self-dual family treated here,
Bollauf and Lin \cite{BollaufLin} reduce the statement to the inequality
$P_C\geq0$ proven in the sequel, and approach it by showing the theta-series
ratio is U-shaped (decreasing then increasing about its symmetry point); this
they verify case-by-case in low dimensions, and unconditionally only on
average over a random self-dual code. Their U-shape route, when available,
additionally locates the global minimum of the ratio at that symmetry point,
which our certificate does not address; conversely, our certificate is
unconditional for the treated family of codes and thus removes the need for
per-code verification.

A closely related question from Gaussian wiretap coding is the Belfiore--Sol\'e
conjecture \cite{BelfioreSole}, that the secrecy function
$\Theta_{\Z^n}/\Theta_\Lambda$ attains its maximum at the symmetry point---the
localization noted above, distinct from the maximality bound itself.
Ernvall-Hyt\"onen~\cite{ErnvallHytonen} established this for all known
extremal even unimodular lattices in dimensions $8$ through $80$, through the
standard expression of the theta series as a polynomial in the Eisenstein
series $E_4$ and the discriminant form $\Delta$; her monotonicity argument
also bounds $\Theta_\Lambda$ by $\Theta_{\Z^n}$ for those specific lattices,
but is neither uniform in the rank nor tied to the self-dual code family. The
results of~\cite{Kominers_Rank32}, meanwhile, are complementary to ours, as
they range over all unimodular lattices but are bounded in rank, whereas the
certificate here is uniform across ranks but specific to unimodular lattices
arising from self-dual codes under Construction A. Our contribution for the
binary self-dual family is an unconditional and elementary proof, valid in all
ranks, together with the equality characterization and the convex-order
interpretation. We stress that the present problem---$\Z^n$ being the
\emph{worst} integral lattice for the theta series---runs opposite to the
much-studied minimization problems for lattice theta and energy, where $E_8$,
the Leech lattice, and the hexagonal lattice are optimal (see \cite{CKMRV}).

\section{Conventions and the Code--Lattice Reduction}

For $0<p<1$ define the Jacobi theta constants
\[
 \theta_2(p)=\sum_{m\in\Z} p^{(m+1/2)^2},\qquad
 \theta_3(p)=\sum_{m\in\Z} p^{m^2},\qquad
 \theta_4(p)=\sum_{m\in\Z} (-1)^m p^{m^2}.
\]
All three are positive on $(0,1)$: this is immediate for $\theta_2$ and
$\theta_3$, and for $\theta_4$ it follows from Jacobi's product identity
$\theta_4(p)=\prod_{m\geq 1}(1-p^{2m})(1-p^{2m-1})^2$. We use the two
duplication identities
\begin{equation}\label{eq:duplication}
  \theta_3(q)^2=\theta_3(q^2)^2+\theta_2(q^2)^2,
  \qquad
  \theta_4(q)^2=\theta_3(q^2)^2-\theta_2(q^2)^2;
\end{equation}
the second, with $\theta_4(q)>0$, gives $0<\theta_2(q^2)<\theta_3(q^2)$, so
that
\begin{equation}\label{eq:zrange}
  z:=\frac{\theta_2(q^2)}{\theta_3(q^2)}\in(0,1)\qquad(0<q<1).
\end{equation}

Let $C\leq \F_2^n$ be a binary linear code, with Hamming weight enumerator
\[
  W_C(X,Y):=\sum_{c\in C}X^{n-\wt(c)}Y^{\wt(c)}
\]
and $W_C(1,z)=\sum_{c\in C}z^{\wt(c)}$. \emph{Construction A} gives
\[
  \Lambda_C=\{x\in\Z^n:x\bmod 2\in C\},
  \qquad L_C=2^{-1/2}\Lambda_C.
\]
The Construction-A lattice $L_C$ is integral iff $C$ is self-orthogonal, and
$\covol(L_C)=2^{n/2-\dim C}$, so $L_C$ is unimodular iff $C$ is self-dual.
Splitting each coordinate by the parity of the corresponding integer gives the
classical identity
\begin{equation}\label{eq:theta-code}
  \Theta_{L_C}(q)=W_C\bigl(\theta_3(q^2),\theta_2(q^2)\bigr),
\end{equation}
since a coordinate $\equiv 0 \pmod 2$ contributes
$\sum_m q^{(2m)^2/2}=\theta_3(q^2)$ and a coordinate $\equiv 1 \pmod 2$
contributes $\sum_m q^{(2m+1)^2/2}=\theta_2(q^2)$ (see, e.g.,
\cite{BroueEnguehard,ConwaySloane,Ebeling}).

Assume now that $C$ is self-dual, so $n=2k$. Since
$\Theta_{\Z^n}(q)=\theta_3(q)^n$, identities \eqref{eq:duplication} and
\eqref{eq:theta-code} give
\begin{equation}\label{eq:gap-reduction}
  \Theta_{\Z^n}(q)-\Theta_{L_C}(q)=\theta_3(q^2)^n\,P_C(z),
\end{equation}
where
\begin{equation}\label{eq:P-expression}
  P_C(z):=(1+z^2)^k-W_C(1,z).
\end{equation}
By \eqref{eq:zrange}, the relevant values of $z$ lie in $(0,1)$; hence the
Gaussian-mass inequality for $L_C$ follows from~\eqref{eq:gap-reduction}
whenever we can show the inequality $P_C(z)\geq 0$ on $[0,1]$ for the
associated self-dual code~$C$.

\section{A Sum-of-Squares Certificate}

\begin{theorem}\label{thm:square-certificate}
Let $C\leq\F_2^{2k}$ be a binary self-dual code. After a coordinate
permutation, choose a systematic generator matrix $G=[I_k\mid A]$. Then
$AA^T=I_k$ over $\F_2$, so $A$ is invertible, and
\begin{equation}\label{eq:square-gap}
  P_C(z)=\frac12\sum_{a\in\F_2^k}
  \left(z^{\wt(a)}-z^{\wt(aA)}\right)^2.
\end{equation}
Moreover $R_C(z):=P_C(z)/\bigl(z^2(1-z^2)^2\bigr)$ is a polynomial in $z^2$
with nonnegative coefficients, given explicitly by
\begin{equation}\label{eq:R-explicit}
  R_C(z)=\frac12\left(
  \sum_{\substack{a\in\F_2^k\\ \wt(a)\ne \wt(aA)}}
  z^{2\min(\wt(a),\wt(aA))-2}
  \left(1+z^2+\cdots+z^{|\wt(a)-\wt(aA)|-2}\right)^2\right).
\end{equation}
In particular, we have $P_C(z)\geq 0$ for all real $z$.
\end{theorem}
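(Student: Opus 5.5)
First I establish the algebraic facts about $A$. Self-duality means $C=C^\perp$, so every pair of rows of $G=[I_k\mid A]$ is orthogonal over $\F_2$, i.e.\ $GG^T=I_k+AA^T=0$. This gives $AA^T=I_k$, and hence $A$ is invertible. The systematic form itself exists after a coordinate permutation, since $G$ has rank $k$. Every codeword is then $(a,aA)$ for a unique $a\in\F_2^k$, with $\wt(c)=\wt(a)+\wt(aA)$. Consequently
\[
W_C(1,z)=\sum_{a\in\F_2^k} z^{\wt(a)}z^{\wt(aA)}.
\]
Since $a\mapsto aA$ is a bijection of $\F_2^k$, we also have
\[
\sum_a z^{2\wt(aA)}=\sum_a z^{2\wt(a)}=(1+z^2)^k.
\]
Expanding the square in \eqref{eq:square-gap} gives $\tfrac12\bigl(2(1+z^2)^k-2W_C(1,z)\bigr)=P_C(z)$, which proves \eqref{eq:square-gap}. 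Nonnegativity for all real $z$ follows at once, since each summand is the square of a real number.

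For \eqref{eq:R-explicit}, I handle the summands termwise. Terms with $\wt(a)=\wt(aA)$ vanish. For the others, write $m=\min(\wt(a),\wt(aA))$ and $d=|\wt(a)-\wt(aA)|\ge1$. Then
\[
\bigl(z^{\wt(a)}-z^{\wt(aA)}\bigr)^2=z^{2m}(1-z^d)^2.
\]
Dividing by $z^2(1-z^2)^2$ requires three inputs.

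(i) $m\ge1$ whenever $d\ge1$. If $m=0$, then either $a=0$, which forces $aA=0$ and hence $d=0$, or $aA=0$, which forces $a=0$ by invertibility. So the factor $z^{2m-2}$ is a genuine monomial.

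(ii) $d$ is even. This is the crux and the step needing care. Codewords of a binary self-dual code are self-orthogonal, so $\wt(c)\equiv0\pmod2$. Thus $\wt(a)+\wt(aA)$ is even, and so is the difference $d$.

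(iii) With $d=2e$, we have $1-z^{2e}=(1-z^2)(1+z^2+\cdots+z^{2e-2})$. This sum is exactly the bracket in \eqref{eq:R-explicit}, whose last exponent is $d-2$.

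Combining (i)--(iii), each nonzero summand divided by $z^2(1-z^2)^2$ equals $z^{2m-2}(1+z^2+\cdots+z^{d-2})^2$. This is a polynomial in $z^2$ with nonnegative coefficients. Summing and including the factor $\tfrac12$ gives \eqref{eq:R-explicit}.

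One point remains: the factor $\tfrac12$ should not spoil integrality of the coefficients. This is not claimed in the statement, but I would note it anyway. The involution $a\mapsto aA$ pairs the terms, because $aA^2$ need not equal $a$; rather, by $AA^T=I$, the map $b=aA\mapsto bA^T=a$ gives a symmetric pairing. Nonnegativity is all that is required, though, so I would not pursue this further.

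The main obstacle is only the parity observation (ii) together with the $m\ge1$ check (i). Everything else is a direct expansion.
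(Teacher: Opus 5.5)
Your proof is correct and follows the paper's argument essentially step for step: you use $GG^T=I_k+AA^T=0$, then the bijection $a\mapsto aA$ to write $(1+z^2)^k$ in two ways and expand the square, then the even weight of codewords together with invertibility of $A$ to get $d$ even and $\min(\wt(a),\wt(aA))\geq 1$ before factoring $1-z^d$. The only blemish is your closing aside on integrality, which is confused, since $a\mapsto aA$ is not an involution and $b\mapsto bA^T$ is just its inverse rather than a pairing of summands; it is also unnecessary, because $R_C$ has integer coefficients simply because $P_C\in\Z[z]$ is exactly divisible by the monic integer polynomial $z^2(1-z^2)^2$.
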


The right-hand sides of \eqref{eq:square-gap} and \eqref{eq:R-explicit} depend
on the chosen information set, but $P_C$ and $R_C$ themselves do not: they are
fixed by the weight enumerator $W_C$ through \eqref{eq:P-expression}. The
systematic form merely exhibits one sum-of-squares certificate for this
coordinate-independent gap.

\begin{proof}
Coordinate permutations leave $W_C(1,z)$ and $P_C(z)$ unchanged. A $[2k,k]$
code $C$ has an information set of size \(k\), meaning a set of \(k\)
coordinates on which projection is an isomorphism \(C\to\F_2^k\); after
permuting these coordinates to the front and choosing the corresponding basis
of \(C\), we can write the systematic generator matrix \(G=[I_k\mid A]\). As
$C=C^\perp$ by hypothesis, the rows of $G$ are mutually orthogonal, so
$0=GG^T=I_k+AA^T$ over $\F_2$, giving $AA^T=I_k$; in particular, $A$ is
invertible and $a\mapsto aA$ permutes $\F_2^k$.

The codewords are $c(a)=(a,aA)$, so with $x(a)=\wt(a)$ and
$y(a)=\wt(aA)$,
\[
  W_C(1,z)=\sum_{a\in\F_2^k}z^{x(a)+y(a)}.
\]
Since $A$ is invertible, we have
\[
  (1+z^2)^k=\sum_a z^{2x(a)}=\sum_a z^{2y(a)};
\]
substituting this into the expression \eqref{eq:P-expression} for $P_C$ and
rearranging gives
\begin{align*}
  P_C(z)&=\frac12\sum_a\bigl(z^{2x(a)}+z^{2y(a)}\bigr)
    -\sum_a z^{x(a)+y(a)}\\
  &=\frac12\sum_a\bigl(z^{2x(a)}+z^{2y(a)}-2z^{x(a)+y(a)}\bigr)\\
  &=\frac12\sum_a\bigl(z^{x(a)}-z^{y(a)}\bigr)^2,
\end{align*}
which is \eqref{eq:square-gap}.

Since $C$ is self-orthogonal, every codeword has even weight (over $\F_2$,
$c\cdot c=\wt(c)=0$), so $x(a)$ and $y(a)$ share parity. If $x(a)=y(a)$,
then the associated summand in~\eqref{eq:square-gap} vanishes. If
$x(a)\ne y(a)$, then $d=|x(a)-y(a)|$ is a positive even integer, and $a\ne0$,
so $x(a),y(a)\geq1$ by invertibility of $A$. Hence
\[
  \left(z^{x(a)}-z^{y(a)}\right)^2
  =z^{2\min(x(a),y(a))}(1-z^d)^2,
  \qquad
  1-z^d=(1-z^2)(1+z^2+\cdots+z^{d-2}),
\]
with $2\min(x(a),y(a))\geq 2$. Dividing by $z^2(1-z^2)^2$ term-by-term gives
\eqref{eq:R-explicit}, in which every summand has nonnegative coefficients.
\end{proof}

As a quick illustration: For the extended binary Hamming code $H_8$, we have
\[
  W_{H_8}(1,z)=1+14z^4+z^8,
\]
and hence
\[
  P_{H_8}(z)=(1+z^2)^4-(1+14z^4+z^8)
  =4z^2(1-z^2)^2;
\]
dividing by $z^2(1-z^2)^2$ gives $4$, which of course is positive. For the
extended binary Golay code $G_{24}$, meanwhile, we have
\[
  W_{G_{24}}(1,z)=1+759z^8+2576z^{12}+759z^{16}+z^{24}.
\]
Thus, we have
\begin{align*}
  P_{G_{24}}(z)
  &=(1+z^2)^{12}
    -\bigl(1+759z^8+2576z^{12}+759z^{16}+z^{24}\bigr) \\
  &=12z^2+66z^4+220z^6-264z^8+792z^{10}
    -1652z^{12}  \\
  &\qquad +792z^{14}-264z^{16}+220z^{18}
    +66z^{20}+12z^{22};
\end{align*}
dividing by $z^2(1-z^2)^2$ gives
\[
  \frac{P_{G_{24}}(z)}{z^2(1-z^2)^2}
  =
  12+90z^2+388z^4+422z^6+1248z^8
  +422z^{10}
  +388z^{12}+90z^{14}+12z^{16},
\]
which is manifestly positive.

The general Gaussian-mass consequence follows by substituting
\(z=\theta_2(q^2)/\theta_3(q^2)\) in the reduction
\eqref{eq:gap-reduction}.

\begin{corollary}\label{cor:gaussian}
For every binary self-dual code $C\leq\F_2^n$ and every $0<q<1$, we have
$\Theta_{L_C}(q)\leq \Theta_{\Z^n}(q)$; that is, the
Gaussian-mass--maximality conjecture holds for all Construction-A lattices
arising from binary self-dual codes.
\end{corollary}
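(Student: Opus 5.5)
The corollary follows by combining the reduction \eqref{eq:gap-reduction} with Theorem~\ref{thm:square-certificate}; I would carry this out in the following order.

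First, let $C\leq\F_2^n$ be binary self-dual. Since $\dim C=n-\dim C^\perp=n-\dim C$, the length is even, so $n=2k$. The Construction-A discussion then shows that $L_C$ is unimodular, and the classical identity \eqref{eq:theta-code} applies. Fix $0<q<1$ and set $z=\theta_2(q^2)/\theta_3(q^2)$. By \eqref{eq:zrange}, which rests on positivity of $\theta_4$ via Jacobi's product and on the second duplication identity in \eqref{eq:duplication}, we have $z\in(0,1)$.

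Next, I would write out the reduction \eqref{eq:gap-reduction} explicitly. Using $\Theta_{\Z^n}(q)=\theta_3(q)^{2k}=\bigl(\theta_3(q^2)^2+\theta_2(q^2)^2\bigr)^k$ from the first duplication identity, and using homogeneity of $W_C$ of degree $n$, we get $W_C(\theta_3(q^2),\theta_2(q^2))=\theta_3(q^2)^{n}W_C(1,z)$. Together these give
\[
\Theta_{\Z^n}(q)-\Theta_{L_C}(q)=\theta_3(q^2)^{n}\bigl((1+z^2)^k-W_C(1,z)\bigr)=\theta_3(q^2)^nP_C(z).
\]

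Finally, Theorem~\ref{thm:square-certificate} gives $P_C(z)\geq 0$ for every real $z$, in particular at our $z\in(0,1)$; here the sum-of-squares formula \eqref{eq:square-gap} is already enough. Since $\theta_3(q^2)>0$, the right-hand side is nonnegative, so $\Theta_{L_C}(q)\leq\Theta_{\Z^n}(q)$.

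There is no real obstacle, because the substantive work sits in Theorem~\ref{thm:square-certificate}. The only points that need care are the homogeneity step in the reduction and the fact that the range \eqref{eq:zrange} is not even needed for the inequality, since $P_C\geq0$ holds globally. That range will matter only later, for the equality characterization.
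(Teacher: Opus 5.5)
Your proposal is correct and is the paper's proof: you combine the reduction \eqref{eq:gap-reduction} with $P_C\geq 0$ from Theorem~\ref{thm:square-certificate} and use $\theta_3(q^2)^n>0$, with the homogeneity step written out explicitly. You are also right that the range $z\in(0,1)$ is not needed for the inequality, since $P_C\geq0$ on all of $\R$; the paper cites it anyway, but it only becomes essential for Corollary~\ref{cor:equality}.
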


\begin{proof}
The result follows from combining \eqref{eq:gap-reduction} with
Theorem~\ref{thm:square-certificate}, as $\theta_3(q^2)^n>0$ and
$z\in(0,1)$.
\end{proof}

\begin{corollary}\label{cor:equality}
Let $C\leq\F_2^{2k}$ be binary and self-dual. Then
$\Theta_{L_C}(q)=\Theta_{\Z^{2k}}(q)$ for some, equivalently every,
$q\in(0,1)$ if and only if, after a coordinate permutation,
$C=\{(a,a):a\in\F_2^k\}$ is the direct sum of $k$ length-two repetition codes;
equivalently $L_C\cong\Z^{2k}$. For every other binary self-dual code the
inequality of Corollary~\ref{cor:gaussian} is strict for all $0<q<1$.
\end{corollary}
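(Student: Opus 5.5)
By \eqref{eq:gap-reduction} and $\theta_3(q^2)^{2k}>0$, equality at a given $q$ is equivalent to $P_C(z)=0$ at the single point $z=\theta_2(q^2)/\theta_3(q^2)\in(0,1)$. The plan is to show that $P_C$ is either identically zero or strictly positive on $(0,1)$. Then equality at one $q$ forces it at every $q$, which gives the ``some, equivalently every'' clause. After that it remains to identify exactly when $P_C\equiv 0$.

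First, Theorem~\ref{thm:square-certificate} writes $P_C(z)=\tfrac12\sum_a(z^{x(a)}-z^{y(a)})^2$, with $x(a)=\wt(a)$ and $y(a)=\wt(aA)$. For fixed $z\in(0,1)$ the map $t\mapsto z^t$ is strictly decreasing, so each summand is nonnegative and vanishes iff $x(a)=y(a)$. Hence $P_C(z)=0$ at some $z\in(0,1)$ iff $\wt(aA)=\wt(a)$ for every $a\in\F_2^k$. In that case $P_C$ vanishes identically; otherwise it is strictly positive on all of $(0,1)$. (Equivalently, one could use \eqref{eq:R-explicit}: $R_C$ is a nonzero polynomial with nonnegative coefficients unless all the differences vanish.)

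Second, we characterize the matrices $A$ with $\wt(aA)=\wt(a)$ for all $a$. Taking $a=e_i$ shows each row of $A$ has weight $1$. Since $A$ is invertible, the rows are distinct unit vectors, so $A$ is a permutation matrix $P$. Conversely, any permutation matrix preserves weight (and satisfies $PP^T=I$). Thus $C=\{(a,aP)\}$, and permuting the last $k$ coordinates by $P^{-1}$ gives $C=\{(a,a)\}$. This is the direct sum of $k$ length-two repetition codes, for which $W_C(1,z)=(1+z^2)^k$ and $P_C\equiv 0$.

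Finally, for the lattice reformulation: the repetition code $\{(x,x)\}\le\F_2^2$ gives $\Lambda=\{(u,v)\in\Z^2:u\equiv v \pmod 2\}$. This has basis $(1,1),(1,-1)$, so after scaling by $2^{-1/2}$ it is an orthonormal basis and $L\cong\Z^2$. Taking direct sums gives $L_C\cong\Z^{2k}$. Conversely, if $L_C\cong\Z^{2k}$ then the theta series agree, so by the above the code is the repetition sum. The only subtle step is the permutation-matrix characterization, and it is short.
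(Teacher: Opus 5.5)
Your proposal is correct and follows essentially the same route as the paper: the sum-of-squares identity forces $\wt(aA)=\wt(a)$ for all $a$, testing on the unit vectors $e_i$ makes $A$ a permutation matrix, and the basis $(1,\pm1)/\sqrt2$ identifies $L_C\cong\Z^{2k}$. The paper leaves implicit that $L_C\cong\Z^{2k}$ forces equal theta series and hence the repetition code; you state this explicitly, which is a small completion.
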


\begin{proof}
By \eqref{eq:gap-reduction} and \eqref{eq:zrange}, equality at some $q$ forces
$P_C(z)=0$ at a point $z\in(0,1)$, so every square in~\eqref{eq:square-gap}
vanishes: $\wt(a)=\wt(aA)$ for all $a$. Taking $a=e_i$ shows every row of $A$
has weight $1$; as $A$ is invertible its rows are distinct standard basis
vectors, so $A$ is a permutation matrix. Permuting the second block of
coordinates makes $A=I_k$, i.e., $C=\{(a,a)\}$. The resulting lattice is an
orthogonal sum of $k$ copies of
$2^{-1/2}\{(s,t)\in\Z^2:s\equiv t\bmod 2\}$, which has the orthonormal basis
$(e_i\pm f_i)/\sqrt2$; hence, $L_C\cong\Z^{2k}$ and
$\Theta_{L_C}=\Theta_{\Z^{2k}}$ identically. Conversely this $C$ gives
$W_C(1,z)=(1+z^2)^k$, so $P_C\equiv0$. For any other self-dual code some
square in \eqref{eq:square-gap} is nonzero, so $P_C(z)>0$ for all
$z\in(0,1)$.
\end{proof}

\section{Remarks}

\subsection*{Maximality as majorization.}
Our certificate is best viewed as a concrete manifestation of convex order. Let
$a$ be uniform in $\F_2^k$ and set $X=\wt(a)$, $Y=\wt(aA)$, and
$K=(X+Y)/2$; we have both $X\sim\Bin(k,1/2)$ and $Y\sim\Bin(k,1/2)$ since
$A$ is invertible. As $K$ is an average of two identically distributed
variables, Jensen's inequality gives, for every convex $\varphi$,
\[
  \E[\varphi(K)]\leq \tfrac12\E[\varphi(X)]+\tfrac12\E[\varphi(Y)]=\E[\varphi(X)].
\]
That is, the half-weight law of $C$ is dominated in convex order by
$\Bin(k,1/2)$, the half-weight law of $\Z^{2k}$. The convex functions
$\varphi(t)=z^{2t}$ ($0<z<1$) return the Gaussian-mass inequality of
Theorem~\ref{thm:square-certificate}. The hinge functions
$\varphi_s(t)=(s-t)_+$ return nonnegativity of every coefficient of $R_C$.
Concretely, write
\[
  W_C(1,z)=\sum_{w=0}^{2k}N_w z^w,
\]
so that \(N_w=\#\{c\in C:\wt(c)=w\}\). Since all codewords have even weight,
\[
  P_C(\sqrt u)=\sum_{j=0}^k\left(\binom kj-N_{2j}\right)u^j.
\]
Hence, for \(r\geq0\),
\[
  [u^r]\,\frac{P_C(\sqrt u)}{u(1-u)^2}
  =\sum_{j=0}^k\left(\binom kj-N_{2j}\right)(r+2-j)_+.
\]
Equivalently, if \(X\sim\Bin(k,1/2)\) and \(K\) is the half-weight of a
uniformly random codeword of \(C\), then
\[
  [u^r]\,\frac{P_C(\sqrt u)}{u(1-u)^2}
  =
  2^k\bigl(\E[(r+2-X)_+]-\E[(r+2-K)_+]\bigr)\geq0.
\]
Thus the Gaussian-mass inequality, the sum-of-squares structure, and the
coefficientwise positivity of \(R_C\) are different specializations of the same
convex-order domination. The single integrated inequality of
\cite[Thm.~4]{BollaufLin} is one further convex functional; convex order is
the entire family at once.

\subsection*{Gleason invariance is not enough.}

One might hope that coefficientwise nonnegativity of $R_C$ would follow
directly from Gleason's theorem---or, more precisely, from membership of
$W_C$ in the invariant ring of the weight-enumerator group
\cite{Gleason,MacWilliamsSloane,NebeRainsSloane}. But unfortunately, it does
not appear to.

We again write
\[
  W_C(1,z)=\sum_{w=0}^n N_w z^w.
\]
At length \(24\), the formal Type II weight enumerators form a one-parameter
family indexed by \(\alpha\), the formal coefficient playing the role of \(N_4\).
If \(W_\alpha\) denotes that formal weight enumerator and
\[
  R_\alpha(z):=\frac{(1+z^2)^{12}-W_\alpha(1,z)}{z^2(1-z^2)^2},
\]
then the \(z^2\)-coefficient of \(R_\alpha\) is \(90-\alpha\), which is already negative
for \(\alpha>90\)---while these formal weight enumerators keep nonnegative
coefficients up to \(\alpha=189\), so the failure is not an artifact of
inadmissibility. Genuine Type II codes of length \(24\) instead satisfy
\(N_4\leq 66\), the maximum being realized by the code whose Construction-A
lattice is the Niemeier lattice with root system $D_{24}$
\cite{Niemeier,PlessSloane,NagaokaOura,ConwaySloane}. The bound is thus a fact
about realizable codes, and the proof above secures it through the genuine
coupling $(\wt(a),\wt(aA))$, rather than via the enumerator ring.

\subsection*{Palindromy.}

For $C$ doubly even, we have \(N_2=0\). Moreover \(\one\in C\): since every
codeword has even weight, \(\one\) is orthogonal to every codeword, and hence
\(\one\in C^\perp=C\). Bitwise complementation---i.e., adding the all-$1$s
word---gives a weight-reversing involution \(c\mapsto c+\one\) on \(C\), so
\(N_w=N_{n-w}\) and both \(W_C(1,z)\) and \((1+z^2)^{n/2}\) are palindromic of
degree \(n\). For Type II codes of positive length, the leading \(z^n\) terms
cancel and \(N_{n-2}=N_2=0\), so \(R_C(z)\) is palindromic in the sense that
\[
  R_C(z)=z^{n-8}R_C(1/z);
\]
moreover, since the \(z^{n-2}\) coefficient of \(P_C\) equals
\(\binom{k}{k-1}-N_{n-2}=k=n/2>0\), the degree of \(R_C\) is exactly \(n-8\),
with leading and---by palindromy---constant coefficient \(n/2\).

\section{Scope}

Our argument uses only that $A$ is invertible and that the codewords have even
weight, so it covers all binary self-dual codes---singly-even (Type I) as well
as doubly-even (Type II). Thus Corollary~\ref{cor:gaussian} shows the
Gaussian-mass conjecture, with equality (Corollary~\ref{cor:equality}), for
all unimodular Construction-A lattices from binary self-dual codes, uniformly
across the family. However, our approach does not reach unimodular lattices
outside this Construction-A family: every $2^{-1/2}\Lambda_C$ contains the
vectors $\sqrt2\,e_i$ and so has minimal norm at most $2$, excluding for
instance the Leech lattice. Nor does our method address Construction-A
lattices from self-orthogonal codes that are not self-dual (where $A$ is no
longer square), or integral lattices of determinant greater than $1$. We note
that for the \emph{unscaled} lattice $\Lambda_C=C+2\Z^n$, a sublattice of
$\Z^n$, the bound $\Theta_{\Lambda_C}\leq\Theta_{\Z^n}$ is immediate
(cf.\ \cite[Thm.~5]{BollaufLin}); the substance of
Corollary~\ref{cor:gaussian} is that the rescaled unimodular lattice
$L_C=2^{-1/2}\Lambda_C$, which is not contained in $\Z^n$, still does not
exceed it.

Once a systematic representation \(G=[I\mid A]\) has been chosen, the
sum-of-squares identity itself uses only that \(A\) is invertible. The
additional divisibility by \(z^2(1-z^2)^2\), and the resulting coefficientwise
positivity statement in powers of \(z^2\), use the binary self-orthogonality
fact that all codewords have even Hamming weight. Thus the same averaging
identity extends over other finite fields; this carries a Gaussian-mass
reading wherever a Construction-A theta dictionary exists---directly for prime
fields, and through Eisenstein or Gaussian integer lattices in the Hermitian
case. That said, the accompanying parity-based polynomial refinement is
specific to the binary case unless further hypotheses are imposed.

\begin{remark}[observed by Noam~D.\ Elkies]\label{rmk:other-fields}
Let $C\leq\F_q^{2k}$ be self-dual for the standard bilinear form over a finite
field $\F_q$, in systematic form $[I\mid A]$, so that $AA^T=-I$ and $A$ is
invertible. With $W_C(1,z)=\sum_{c\in C}z^{\wt(c)}$, the same averaging
identity as in Theorem~\ref{thm:square-certificate} gives
\[
  (1+(q-1)z^2)^k-W_C(1,z)
  =\frac12\sum_{a\in\F_q^k}\bigl(z^{\wt(a)}-z^{\wt(aA)}\bigr)^2\ \geq 0,
\]
the reference $(1+(q-1)z^2)^k$ being the weight enumerator of the diagonal code
$\bigoplus^k\langle(1,d)\rangle$ with $d^2=-1$ (when an element \(d\in\F_q\)
with \(d^2=-1\) exists). Equality at some (equivalently every) \(z\in(0,1)\)
forces \(\wt(aA)=\wt(a)\) for all \(a\), hence \(A\) is monomial by the
MacWilliams equivalence theorem \cite{MacWilliamsSloane}. The condition
\(AA^T=-I\) then forces each nonzero monomial entry to have square \(-1\).
Thus equality is attainable exactly when \(-1\) is a square in \(\F_q\)---i.e.,
when \(q\) is even or \(q\equiv1\pmod4\).

The identity holds verbatim for Hermitian self-dual codes over \(\F_{q^2}\),
i.e., codes satisfying \(C=C^{\perp_H}\) for the Hermitian form
\(\langle x,y\rangle_H=\sum_i x_i\overline{y_i}\). In systematic form this
gives \(A\bar A^T=-I\), and the reference is \((1+(q^2-1)z^2)^k\); there
equality requires $-1$ to be a norm, which always holds because the norm
$\F_{q^2}^\times\to\F_q^\times$ is surjective, so the bound is attained, for
instance over $\F_4$.
\end{remark}

For formally self-dual codes the half-weight law remains defined, and whether
the convex-order domination persists is open. General unimodular lattices, by
contrast, need not arise from codes and carry no direct half-weight law, so
even identifying the right majorization analogue there is unclear. A positive
answer for formally self-dual codes would settle the corresponding
code-derived Gaussian-mass bound, while a suitable lattice-level majorization
principle could potentially address the full conjecture.

\newcommand{\etalchar}[1]{$^{#1}$}
\providecommand{\bysame}{\leavevmode\hbox to3em{\hrulefill}\thinspace}
\providecommand{\MR}{\relax\ifhmode\unskip\space\fi MR }
\providecommand{\MRhref}[2]{%
  \href{http://www.ams.org/mathscinet-getitem?mr=#1}{#2}
}
\providecommand{\href}[2]{#2}


\begin{thebibliography}{CKM{\etalchar{+}}22}

\bibitem[BE72]{BroueEnguehard}
Michel Brou\'e and Michel Enguehard, \emph{Polyn\^omes des poids de certains
  codes et fonctions th\^eta de certains r\'eseaux}, Annales scientifiques de
  l'\'Ecole Normale Sup\'erieure (4) \textbf{5} (1972), no.~1, 157--181.

\bibitem[BL25]{BollaufLin}
Maiara~F. Bollauf and Hsuan-Yin Lin, \emph{On the maximum flatness factor over
  unimodular lattices}, preprint, arXiv:2403.16932, 2025.

\bibitem[BS10]{BelfioreSole}
Jean-Claude Belfiore and Patrick Sol\'e, \emph{Unimodular lattices for the
  {Gaussian} wiretap channel}, 2010 IEEE Information Theory Workshop (ITW)
  (Dublin, Ireland), 2010.

\bibitem[CKM{\etalchar{+}}22]{CKMRV}
Henry Cohn, Abhinav Kumar, Stephen~D. Miller, Danylo Radchenko, and Maryna
  Viazovska, \emph{Universal optimality of the {$E_8$} and {Leech} lattices and
  interpolation formulas}, Annals of Mathematics \textbf{196} (2022), no.~3,
  983--1082.

\bibitem[CS99]{ConwaySloane}
John~H. Conway and Neil J.~A. Sloane, \emph{{Sphere Packings, Lattices and
  Groups}}, 3rd ed., Grundlehren der mathematischen Wissenschaften, vol. 290,
  Springer, New York, 1999.

\bibitem[Ebe13]{Ebeling}
Wolfgang Ebeling, \emph{{Lattices and Codes: A Course Partially Based on
  Lectures by Friedrich Hirzebruch}}, 3rd ed., Advanced Lectures in
  Mathematics, Springer Spektrum, Wiesbaden, 2013.

\bibitem[EH12]{ErnvallHytonen}
Anne-Maria Ernvall-Hyt\"onen, \emph{On a conjecture by {Belfiore} and {Sol\'e}
  on some lattices}, IEEE Transactions on Information Theory \textbf{58}
  (2012), no.~9, 5950--5955.

\bibitem[ERSD26]{EisenbergRegevSD}
Yael Eisenberg, Oded Regev, and Noah Stephens-Davidowitz, \emph{A tight reverse
  {M}inkowski inequality for the {E}pstein zeta function}, Proc. Amer. Math.
  Soc. (2026), to appear.

\bibitem[Gle71]{Gleason}
Andrew~M. Gleason, \emph{Weight polynomials of self-dual codes and the
  {MacWilliams} identities}, Actes du Congr\`es International des
  Math\'ematiciens (Nice, 1970), vol. Tome 3, Gauthier-Villars, Paris, 1971,
  pp.~211--215.

\bibitem[Kom26a]{Kominers_NoGo}
Scott~Duke Kominers, \emph{Saturation and no-go theorems for scalar {P}oisson
  certificates of {G}aussian mass maximality}, preprint, arXiv:2605.26803,
  2026.

\bibitem[Kom26b]{Kominers_Rank32}
\bysame, \emph{A sharp reverse {M}inkowski inequality for the {G}aussian mass
  of integral unimodular lattices through rank $32$}, preprint,
  arXiv:2606.01347, 2026.

\bibitem[MS77]{MacWilliamsSloane}
Florence~Jessie MacWilliams and Neil J.~A. Sloane, \emph{{The Theory of
  Error-Correcting Codes}}, North-Holland Mathematical Library, vol.~16,
  North-Holland, Amsterdam, 1977.

\bibitem[Nie73]{Niemeier}
Hans-Volker Niemeier, \emph{Definite quadratische {Formen} der {Dimension} 24
  und {Diskriminante} 1}, Journal of Number Theory \textbf{5} (1973), no.~2,
  142--178.

\bibitem[NO24]{NagaokaOura}
Shoyu Nagaoka and Manabu Oura, \emph{Note on the {Type II} codes of length
  $24$}, Kumamoto J. Math \textbf{37} (2024), 1--9.

\bibitem[NRS06]{NebeRainsSloane}
Gabriele Nebe, Eric~M. Rains, and Neil J.~A. Sloane, \emph{{Self-Dual Codes and
  Invariant Theory}}, Algorithms and Computation in Mathematics, vol.~17,
  Springer, Berlin, 2006.

\bibitem[PS75]{PlessSloane}
Vera Pless and Neil J.~A. Sloane, \emph{On the classification and enumeration
  of self-dual codes}, Journal of Combinatorial Theory, Series A \textbf{18}
  (1975), no.~3, 313--335.

\bibitem[RSD24]{RegevSD_RevMink}
Oded Regev and Noah Stephens-Davidowitz, \emph{A reverse {Minkowski} theorem},
  Annals of Mathematics \textbf{199} (2024), no.~1, 1--49.

\bibitem[RSD26]{RegevSD_JNT}
\bysame, \emph{A simple proof of a reverse {Minkowski} theorem for integral
  lattices}, Journal of Number Theory \textbf{279} (2026), 256--266.

\end{thebibliography}
\end{document}